\newcommand{\eqe}{\approx_{\epsilon}}
\newcommand\Z{{\mathbb Z}}
\newcommand\N{{\mathbb N}}
\newcommand\bp{\begin{proof}}
\newcommand\ep{\end{proof}}
\newcommand\br{\begin{rem}}
\newcommand\er{\end{rem}}
\newcommand\bl{\begin{lem}}
\newcommand\el{\end{lem}}
\newcommand\bprop{\begin{prop}}
\newcommand\eprop{\end{prop}}
\newcommand\bt{\begin{thm}}
\newcommand\et{\end{thm}}
\newcommand\bc{\begin{cor}}
\newcommand\ec{\end{cor}}
\newcommand\ba{\begin{aligned}}
\newcommand\ea{\end{aligned}}
\newcommand\bi{\begin{itemize}}
\newcommand\ei{\end{itemize}}
\newcommand\bd{\begin{defn}}
\newcommand\ed{\end{defn}}
\newcommand\bq{\begin{q}}
\newcommand\eq{\end{q}}
\newcommand\cstar{$C^*$-algebra }
\newcommand\Qq{{\mathcal Q}}
\newcommand\Zz{{\mathcal Z}}
\DeclareMathOperator{\Ad}{Ad}
\DeclareMathOperator{\diag}{diag}
\DeclareMathOperator{\id}{id}
\theoremstyle{theorem}
\newtheorem{thm}{Theorem}[section]
\newtheorem*{thm*}{Theorem}
\newtheorem{prop}[thm]{Proposition}
\newtheorem{lem}[thm]{Lemma}
\newtheorem{cor}[thm]{Corollary}
\newtheorem{q}[thm]{Question}
\newtheorem*{conjzero*}{Conjecture Zero}
\newtheorem*{conjone*}{Conjecture One}
\theoremstyle{remark}
\newtheorem{rem}[thm]{Remark}
\newtheorem*{rem*}{Remark}
\theoremstyle{definition}
\newtheorem{defn}[thm]{Definition}
\address{\tiny Department of Mathematics, University of Oregon, Eugene, OR 97403-1222, USA}
\email{msun@uoregon.edu}
\author{Michael Sun}
\title{Existence of the Matui-Sato tracial Rokhlin property}
\begin{document}
\begin{abstract} We show by construction that when $G$ is an elementary amenable group and $A$ is a unital simple nuclear and tracially approximately divisible $C^*$-algebra, there exists an action $\omega$ of $G$ on $A$ with the tracial Rokhlin property in the sense of Matui and Sato. In particular, group actions with this Matui-Sato tracial Rokhlin property always exist for unital simple separable nuclear $C^*$-algebras with tracial rank at most one. If $A$ is simple with rational tracial rank at most one, then the crossed product $A\rtimes_{\omega}G$ is also simple with rational tracial rank at most one.
\end{abstract}
\maketitle

\section*{Introduction}

One fundamental way to investigate the structure of a $C^*$-algebra is through the study of its group actions.  An indispensable part of this theme is the crossed product construction. Given a group $G$, a $C^*$-algebra $A$ and a group action $\alpha$ of $G$ on $A$, we can construct a $C^*$-algebra $A\rtimes_{\alpha}G$ called the crossed product. When $G$ is discrete and $A$ is unital, we have the following presentation for the crossed product:
$$A\rtimes_{\alpha}G=\langle a, u_g\,|\,a\in A, g\in G, \alpha_g(a)=u_gau_g^*\rangle.$$
Despite its innocent appearance, it can be quite a challenge to determine what kind of $C^*$-algebra is being represented.

This brings us to another interesting aspect of the study of $C^*$-algebras. Namely the success of the classification of simple $C^*$-algebras using the \emph{Elliott invariant}. At the forefront of this success are the large classes of unital simple separable nuclear ($\Zz$-stable) $C^*$-algebras satisfying the Universal Coefficient Theorem and having \emph{tracial rank} zero, tracial rank at most 1, \emph{rational tracial rank} zero and rational tracial rank at most 1, which were discovered to be classifiable by Lin \cite{LinDuke}, \cite{LinCJM}, \cite{Lintad} and by Lin-Niu \cite{LinNiu} and Lin-Winter \cite{Winter}. Classifiable $C^*$-algebras necessarily possess a property called $\Zz$-stability which is attracting a lot of attention in work extending Elliott's classification program, while those $C^*$-algebras of tracial rank at most one are also known to belong to a larger class of algebras called \emph{tracially approximately divisible} $C^*$-algebras.

From the point of view of classification, the crossed product construction gives a way to explicitly construct algebras which belong to a large class of classifiable $C^*$-algebras that were otherwise only identifiable by their Elliott invariant.
Conversely, the classification of $C^*$-algebras also brings clarity to the crossed product construction itself by identifying algebras otherwise only defined by generators and relations.

Of major interest with respect to these two themes is a property of group actions called the \emph{tracial Rokhlin property}. Its appeal comes from the promise that algebras with desirable properties combined with actions with the tracial Rokhlin property can be used to construct crossed products with the same desirable properties and in many cases to even have the crossed product belong to the same class of classifiable algebras as the original algebra. Another advantage of great practical importance is that this property is believed to exist in abundance, which was lacking in its predecessor, the \emph{Rokhlin property}.

The question of whether this property exists will be the main focus of this article. Originally, the definition of the tracial Rokhlin property was only stated for $\Z$ (Osaka-Phillips \cite{OP}) and finite groups (Phillips \cite{P}), and there is an existence type result for $\Z$ in Lin \cite{Linkish} together with uniqueness. These build on the work of Kishimoto and others. The definition was then extended to include discrete amenable groups acting on certain $\Zz$-stable algebras in Matui-Sato \cite{MS2} where it was also shown that the crossed product retained the $\Zz$-stability of the original algebra. We state their definition more generally for $\Zz$-stable tracially approximately divisible $C^*$-algebras, which we will see is a natural setting for their definition. We will construct in this article, for every countable discrete group $G$ and $\Zz$-stable $C^*$-algebra $A$, an action $\omega$ of $G$ on $A$ to prove the following in Section 3 as Corollary \ref{onA}.  
\begin{thm*}
Given a countable discrete elementary amenable group $G$ and a unital simple separable $\Zz$-stable tracially approximately divisible $C^*$-algebra $A$,
then there exists a group action $\omega$ of $G$ on $A$ such that $\omega$ has the tracial Rokhlin property.
\end{thm*}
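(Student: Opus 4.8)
The plan is to reduce the existence of a tracial Rokhlin action to the construction of suitable approximately central \emph{Rokhlin towers} for $G$ inside the central sequence algebra $A_\infty\cap A'$, and then to build those towers by induction on the structure of the elementary amenable group $G$. First I would fix the construction of $\omega$ so that the induced action $\omega_\infty$ on $A_\infty\cap A'$ has a tractable, essentially ``model'' form. The role of $\Zz$-stability is that, since $A\cong A\otimes\Zz$, the central sequence algebra absorbs $\Zz$, while tracial approximate divisibility guarantees that for every $n$ the algebra $A_\infty\cap A'$ contains unital-up-to-small-trace copies of $M_n$. The reduction I would then establish is that $\omega$ has the Matui--Sato tracial Rokhlin property as soon as, for each finite $F\subseteq G$, each \eps, and each approximately $F$-invariant (F\o lner) shape $S\subseteq G$, one can produce in $A_\infty\cap A'$ a system of positive contractions $\{f_s\}_{s\in S}$ that are mutually orthogonal, satisfy $\omega_{\infty,g}(f_s)\approx f_{gs}$ on the relevant translates, and whose complement $1-\sum_{s}f_s$ is small in every limit trace; the word ``tracial'' is exactly the relaxation that permits this complement to be nonzero but trace-small.

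\textbf{Base cases.} For $G$ finite of order $n$, take a unital-up-to-small-trace embedding $M_n\hookrightarrow A_\infty\cap A'$ supplied by tracial approximate divisibility, and let $\{f_g\}_{g\in G}$ be the rank-one diagonal projections indexed by the standard basis of $\ell^2(G)$; arranging $\omega$ to act by the left regular representation on this block makes $\omega_{\infty,h}(f_g)=f_{hg}$, and the defect is absorbed into the small trace complement. For $G=\Z$, and more generally for finitely generated abelian $G$, the towers are the classical Rokhlin towers of arbitrary height $n$ produced from the cyclic shift on the same $M_n$-block, the F\o lner condition being the assertion that the height may be taken large. This disposes of the two generating families of the hierarchy.

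\textbf{Induction steps.} The class of elementary amenable groups is the smallest class containing the finite and abelian groups and closed under subgroups, quotients, extensions, and directed unions, and it carries an ordinal stratification along these operations. For an extension $1\to N\to G\to Q\to 1$ I would combine a F\o lner tower $\{f^N_s\}_{s\in S_N}$ for $N$ with one $\{f^Q_t\}_{t\in S_Q}$ for $Q$: choosing a set-theoretic section of $Q$ and using quantitative amenability of extensions, an approximately $F$-invariant shape $S\subseteq G$ can be tiled as $S\approx S_N\cdot\widetilde{S_Q}$, and the corresponding products of tower elements, formed in two commuting copies of the central sequence algebra arising from $\Zz\otimes\Zz\cong\Zz$, give an approximately $G$-invariant system whose two trace complements compose to a still-small complement. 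For a directed union $G=\bigcup_n G_n$ I would use a diagonal argument: every finite $F\subseteq G$ lies in some $G_n$, so an $F$-tower for $G$ is read off from an $F$-tower for $G_n$, with tracial smallness inherited. Passing these two steps through the ordinal hierarchy, together with the easy behaviour under subgroups and quotients, yields the towers for every elementary amenable $G$.

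\textbf{Main obstacle.} The hardest part will be the extension step: marrying the F\o lner tiling combinatorics of the amenable group $G$ with the operator-algebraic tower construction so that \emph{simultaneously} the resulting system is approximately $F$-invariant, the trace of its complement stays below $\epsilon$ after composing the defects coming from $N$ and $Q$, and the system remains approximately central and $\omega_\infty$-equivariant. Controlling the accumulation of trace defects and of commutator errors across the transfinite induction, and transporting the abstractly built model towers $\omega_\infty$-equivariantly into $A_\infty\cap A'$ by means of $\Zz$-absorption, is precisely where the tracial approximate divisibility hypothesis must be used most carefully.
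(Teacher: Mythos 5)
Your proposal has a genuine gap, and it is the central one: you never construct the action $\omega$. The theorem is an existence statement, so everything hinges on exhibiting a concrete action and then verifying the Rokhlin condition for it; your opening move, to ``fix the construction of $\omega$ so that the induced action $\omega_\infty$ on $A_\infty\cap A'$ has a tractable model form,'' presupposes exactly the object that has to be built. The paper's construction is this missing ingredient: identify $\Zz\cong\Zz^{\otimes G}$, let $\beta$ be the Bernoulli shift, let $\gamma$ be the infinite tensor power of $\beta$ under a further identification $\Zz\cong\Zz^{\otimes\N}$ (Definition \ref{gamma}), and take $\omega=\id_A\otimes\gamma$ on $A\otimes\Zz\cong A$ (Definition \ref{omega}). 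Without something of this tensor-product form you cannot carry out even your base cases: the copies of $M_n$ supplied by tracial approximate divisibility sit inside $A$, and the induced action on $A_\infty\cap A'$ is determined by the action on $A$ --- it cannot be ``arranged'' after the fact to permute the diagonal projections of such a block by the left regular representation. The only part of the picture on which you have genuine control is a tensor factor, and that factor is $\Zz$.

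This leads to the second gap, which your induction never confronts: $\Zz$ is projectionless. So the Rokhlin projections can come neither from the $\Zz$-factor alone (it has no nontrivial projections) nor from the divisibility blocks alone (the action does not move them); the whole difficulty is to marry the two. The paper does this in Lemma \ref{ps} and Theorem \ref{main}: Matui--Sato's theorem that strongly outer actions of elementary amenable groups on unital simple separable tracial-rank-zero algebras with finitely many extremal traces have the tracial Rokhlin property (Theorem \ref{sotrp} --- this is where elementary amenability enters, as a black box via their property (Q)) is applied to $\id\otimes\gamma$ on $\Qq\otimes\Zz$, producing projections $q_n\in M_m\otimes\Zz$ whose translates under $\id_m\otimes\gamma_g$, $g\in K$, are approximately orthogonal and whose trace is approximately $|K|^{-1}$; then tracial approximate divisibility of $A$ gives a $*$-homomorphism $\varphi:M_{m'}\to A$ with tracially almost full image and small commutators against a prescribed finite set, yielding an embedding $M_m\otimes\Zz\hookrightarrow A\otimes\Zz^{\otimes n'}\otimes\Zz$ that is equivariant precisely because $\omega$ acts trivially on the $A$-leg, and which transports $q_n$ to the required central projections $p_n$. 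Your alternative --- rebuilding towers by transfinite induction over the elementary amenable hierarchy --- would, even if the unresolved extension-step issues were fixed (non-split extensions, conjugation by the section twisting the $N$-towers, quantitative tiling of F\o lner sets), amount to reproving Matui--Sato's theorem from scratch rather than invoking it, and it would still need the model Bernoulli action and the workaround for projectionlessness described above, neither of which appears in your outline.
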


Built into the construction of $\omega$ is a family of $G$-actions $\gamma$ on the Jiang-Su algebra $\Zz$, which we introduce in Section 2.
A preliminary investigation is also undertaken to ascertain the classifiability of $\Zz\rtimes_{\gamma}G$ and $A\rtimes_{\omega}G$ in Sections 4 and 5. We show 
\begin{thm*}
Suppose $A$ is a unital simple $\Zz$-stable $C^*$-algebra with rational tracial rank at most one, suppose $G$ belongs to the class of groups generated by finite and abelian groups under increasing unions and taking subgroups and suppose $\omega$ is the action constructed to prove the above. Then $A\rtimes_{\omega} G$ is a unital simple $\Zz$-stable $C^*$-algebra with rational tracial rank at most one.
\end{thm*}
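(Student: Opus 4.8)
The plan is to check each structural property of $A\rtimes_{\omega}G$ separately, with the rational tracial rank bound being the only substantial point. Unitality is immediate because $A$ is unital and $G$ is discrete; nuclearity follows from nuclearity of $A$ together with amenability of $G$; simplicity follows from simplicity of $A$ and an outerness argument supplied by the tracial Rokhlin property of $\omega$; and $\Zz$-stability of the crossed product is retained exactly as in the Matui--Sato setting \cite{MS2} that already underlies the first theorem. So the real work is to establish that $A\rtimes_{\omega}G$ has rational tracial rank at most one.

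First I would convert the rational statement into an absolute one by tensoring with the universal UHF algebra $\Qq$. Since the $\Qq$-direction is fixed by the action, $(A\rtimes_{\omega}G)\otimes\Qq\cong (A\otimes\Qq)\rtimes_{\omega\otimes\id}G$, and by hypothesis $B:=A\otimes\Qq$ has tracial rank at most one while remaining simple, unital and $\Zz$-stable; moreover $\omega\otimes\id$ still has the tracial Rokhlin property. Thus it suffices to prove that for $G$ in the given class and any such $B$ carrying the constructed tracial Rokhlin action, the crossed product $B\rtimes G$ has tracial rank at most one, since rational tracial rank at most one for $A\rtimes_\omega G$ is precisely $\mathrm{TR}((A\rtimes_\omega G)\otimes\Qq)\le 1$.

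The heart of the argument is a structural induction along the generation of the group class. For the base cases I would invoke the known preservation of tracial rank at most one under a single $\Z$-action with the tracial Rokhlin property \cite{Linkish} and under finite group actions with the tracial Rokhlin property \cite{P}; because each intermediate crossed product stays simple, $\Zz$-stable and of tracial rank at most one, iterating these two results disposes of every finitely generated abelian group $\Z^{n}\times F$. For an increasing union $G=\bigcup_{n}G_{n}$ one has $B\rtimes G=\varinjlim (B\rtimes G_{n})$ with injective unital connecting maps, so the permanence of tracial rank at most one (together with simplicity and $\Zz$-stability) under inductive limits propagates the conclusion from the $G_{n}$ to $G$. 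For the subgroup closure I would use that every group in the class is the increasing union of its finitely generated subgroups, each of which one checks is finite or finitely generated abelian, and that the constructed action restricts coherently to the corresponding subgroup action; this reduces the subgroup case back to the base cases together with the inductive-limit step.

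The main obstacle I anticipate is the interaction between base-case preservation and iteration: one must verify that after crossing by a single generator the algebra is still simple, $\Zz$-stable and of tracial rank at most one, and, crucially, that the induced action of the next generator still enjoys the tracial Rokhlin property relative to this new algebra. Confirming that the tracial Rokhlin property of the constructed $\omega$ persists under passage to intermediate crossed products and under restriction to subgroups—so that the induction can be initiated at each node of the group hierarchy—is where the delicate estimates reside, and is the step I expect to consume most of the effort.
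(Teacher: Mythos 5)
Your strategy is genuinely different from the paper's, and it contains a gap that you flag yourself but never close --- and that cannot be closed with the results you cite. Your induction needs two things. First, the base cases: you cite Phillips \cite{P} for finite groups and Lin \cite{Linkish} for $\Z$, but Phillips's theorem preserves tracial rank \emph{zero}, not tracial rank at most one, and the $\Z$-action results that preserve tracial rank at most one (see \cite{LinRok}) require the tracial \emph{cyclic} Rokhlin property, which is a priori stronger than the Matui--Sato tracial Rokhlin property and is known to follow from it only under extra hypotheses (such as density of $\rho_A(K_0(A))$ in the affine functions on $T(A)$). Second, and more seriously, the inductive step: after forming $B\rtimes\Z$ you need the induced action of the next generator on this \emph{new} algebra to again have the tracial Rokhlin property, and you need the new algebra to again be simple, $\Zz$-stable, tracially approximately divisible and of tracial rank at most one. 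There is no permanence theorem supplying this, and nothing in your proposal produces the required central sequences of projections in $B\rtimes\Z$; acknowledging that this is ``where the delicate estimates reside'' does not discharge it. Since every group in the class beyond the finite and abelian base cases is reached through exactly this step, the induction never gets started.

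The gap is avoidable precisely because the statement concerns the \emph{specific} constructed action $\omega=\id_A\otimes\gamma$, whose product form is what the paper exploits: by Proposition \ref{factor}, $(A\otimes\Zz)\rtimes_{\omega}G\cong A\otimes(\Zz\rtimes_{\gamma}G)$, so no theorem about crossed products by Rokhlin-type actions is ever needed. Tensoring with $\Qq$ (Lemma \ref{qfactor}) reduces the claim to two inputs: (i) $\Qq\otimes(\Zz\rtimes_{\gamma}G)$ has tracial rank zero for $G$ in the stated class --- this is Theorem \ref{Ccrtr0}, proved by showing $\Zz\rtimes_{\gamma}G$ is quasidiagonal (finite groups, abelian groups, increasing unions, subgroups) and invoking Matui--Sato; and (ii) the tracial rank of a tensor product is bounded by the sum of the tracial ranks of the factors (Hu--Lin--Xue \cite{LinHuXue}). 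Your group-theoretic bookkeeping (reduction to finitely generated subgroups, inductive limits) is sound and in fact parallels the paper's Section 4, but it belongs at the level of the single algebra $\Zz\rtimes_{\gamma}G$, where these reductions are honest operations on crossed products of a fixed algebra; carried out at the level of $A\rtimes_{\omega}G$, each stage changes both the algebra and the action, and the tools you invoke no longer apply.
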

\begin{rem*}As one would have it, very recent results of Ozawa-R\o rdam-Sato \cite{ORS} imply that we can extend this to all elementary amenable groups. The difference in their result comes entirely from them being able to establish the Universal Coefficient Theorem for the crossed products they consider.
\end{rem*}
We can also replace rational tracial rank at most one by the classes of tracial rank zero or at most one and rational tracial rank zero and get the same result. This shows there is always an action present that has the tracial Rokhlin property and preserves the tracial rank.

\subsection*{Acknowledgements} I would like to thank Huaxin Lin for his advice and Yasuhiko Sato for showing me a proof of  \cite[Lemmas 6.12, 6.13]{MS2} and the referee for suggesting many helpful improvements. Most of this was done at the Research Center for Operator Algebras in Shanghai and can be found as a part of the author's PhD thesis.
\subsection*{Notation} 
We adopt the following conventions throughout this article:
\bi
\item Denote the cardinality of a set $S$ by $|S|$.\\
\item Write $a\approx_{\epsilon}b$ to stand for $\|a-b\|<\epsilon$.
\ei
\section{The tracial Rokhlin property for discrete groups}
	
We define the tracial Rokhlin property and strong outerness for discrete group actions on $C^*$-algebras with reference to Matui-Sato \cite{MS2}.

\bd Let $G$ be a countable discrete group, let $F\subset G$ be a finite subset and let $\epsilon>0$. We say a finite subset $K$ of $G$ is $(F,\epsilon)$-invariant if $K\neq\emptyset$ and
$$\left|K\cap\bigcap_{g\in F}g^{-1}K\right|\geq (1-\epsilon)|K|.$$
\ed
\bd A countable discrete group $G$ is said to be \emph{amenable} if an $(F,\epsilon)$-invariant subset exists from any $(F,\epsilon)$. The group $G$ is said to be \emph{elementary amenable} if it is contained in the smallest class of groups that contains all abelian groups, all finite groups and is closed under taking subgroups, quotients, direct limits and extensions.
\ed
The following definition of tracial Rokhlin property appears in Matui-Sato \cite{MS2} with the assumption of tracial rank zero and will be our working definition throughout this article with the assumption of tracial rank zero relaxed. When $A$ has strict comparison, this is equivalent to a natural generalization of the tracial Rokhlin property with the trace condition replaced by a Cuntz relation.

Let $T(A)$ denote the tracial state space of $A$.
\bd\label{mstrp} A group action $\alpha$ of $G$ on a \cstar $A$ has the \emph{Matui-Sato tracial Rokhlin property} if for every finite subset $F\subset G$ and $\epsilon>0$, there is a finite $(F,\epsilon)$-invariant subset $K$ in $G$ and a central sequence $(p_n)_{n\in\N}$ in $A$ consisting of projections such that 
\bi
\item $\lim_{n\to\infty}\alpha_g(p_n)\alpha_h(p_n)=0$ for all $g,h\in K$ such that $g\neq h$\\
\item $\lim_{n\to\infty}\max_{\tau\in T(A)}|\tau(p_n)-|K|^{-1}|=0$.
\ei
\ed
There is also the weaker notion of this for algebras without projections called the \emph{weak Rokhlin property} (Matui-Sato \cite[Definition 2.5]{MS2}).
Here is a condition which is a priori weaker than the properties above. 
\bd Let $A$ be a $C^*$-algebra and $\pi_{\tau}$ be the representation obtained from the Gelfand-Naimark-Segal construction with respect to the tracial state $\tau$. An automorphism is said to be \emph{weakly inner} if it is inner when considered as an automorphism of the weak closure $\pi_{\tau}(A)''$ for some trace $\tau$ invariant under the automorphism. An action $\alpha$ of $G$ on $A$ is \emph{strongly outer} if $\alpha_g$ is not weakly inner for all $g\in G\setminus\{1\}$.
\ed
The next lemma is taken from Matui-Sato \cite[Lemmas 6.12, 6.13]{MS2}. 
\begin{lem}\label{so}
Let $A$ be a simple $C^*$-algebra with unique trace $\tau$ and $\alpha$ an action of a group $G$ on $A$ such that $\alpha_g\neq\id$ when $g\neq1$.\\
Then $\alpha^{\otimes\N}=\bigotimes_{n\in\N}\alpha$ is a strongly outer action on $A^{\otimes\N}=\bigotimes_{n\in\N} A$.
\end{lem}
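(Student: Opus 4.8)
The plan is to fix $g\in G\setminus\{1\}$, write $B=A^{\otimes\N}$ and $\beta=(\alpha^{\otimes\N})_g=\bigotimes_n\alpha_g$, and argue by contradiction. So I suppose $\beta$ is weakly inner: there is a $\beta$-invariant tracial state $\sigma$ on $B$ and a unitary $U$ in $M=\pi_\sigma(B)''$ with $\bar\beta=\Ad(U)$, where $\bar\beta$ is the normal extension of $\beta$ to $M$ supplied by the invariance of $\sigma$. The first thing I would record is that the hypotheses on $A$ control $\sigma$ on each tensor leg. Since $A$ is simple, $\tau$ is faithful (its tracial null set is a proper ideal, hence $0$), and for the $n$-th coordinate embedding $\iota_n\colon A\to B$ the functional $\sigma\circ\iota_n$ is a tracial state on $A$, so $\sigma\circ\iota_n=\tau$ by uniqueness. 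Crucially this holds for \emph{every} trace $\sigma$, so I will never need uniqueness (or a canonical choice) of a trace on the infinite tensor product.

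Next I would set up the displacement. Choose $y\in A$ with $\|y\|\le1$ and $\alpha_g(y)\neq y$ (possible since $\alpha_g\neq\id$), and put $\delta=\|\alpha_g(y)-y\|_{2,\tau}>0$ by faithfulness of $\tau$. Writing $B$ as the closure of $\bigcup_N B_N$ with $B_N=\bigotimes_{n=1}^N A$, the $*$-subalgebra $\pi_\sigma(\bigcup_N B_N)$ is strongly dense in $M$, so Kaplansky's density theorem lets me choose $V$ in the unit ball of some $\pi_\sigma(B_N)$ with $\|U-V\|_{2,\sigma}<\delta/3$. Now I place $y$ in a leg beyond the support of $V$: set $x=\pi_\sigma(\iota_{N+1}(y))$. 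On one hand $V$ commutes with $x$ because they come from disjoint tensor factors, so $[V,x]=0$; on the other hand the leg computation above gives
\[
\|\bar\beta(x)-x\|_{2,\sigma}=\|\iota_{N+1}(\alpha_g(y)-y)\|_{2,\sigma}=\|\alpha_g(y)-y\|_{2,\tau}=\delta .
\]
But since $U$ is a unitary, $\|\bar\beta(x)-x\|_{2,\sigma}=\|UxU^*-x\|_{2,\sigma}=\|[U,x]\|_{2,\sigma}=\|[U-V,x]\|_{2,\sigma}\le 2\|x\|\,\|U-V\|_{2,\sigma}<2\delta/3$, a contradiction. As $g\neq1$ and $\sigma$ were arbitrary, $\beta$ is not weakly inner, i.e.\ $\alpha^{\otimes\N}$ is strongly outer.

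The engine here is the ``escape to a far tensor factor'': any candidate implementing unitary $U$ is, up to small $2$-norm error, supported on finitely many legs, whereas $\beta$ keeps acting nontrivially on every leg, and a commutator estimate converts this mismatch into a contradiction. The step I expect to need the most care is handling an \emph{arbitrary} invariant trace $\sigma$ rather than the obvious product trace $\tau^{\otimes\N}$: the definition of strong outerness quantifies over all invariant traces, and a priori $\pi_\sigma(B)''$ need not be the tensor-product factor and the trace on $B$ need not be unique. The observation that $\sigma$ must restrict to $\tau$ on each leg, which uses only simplicity and uniqueness of the trace on $A$ and no nuclearity or structure theory for $B$, is precisely what dissolves this obstacle by making the displacement estimate leg-local and therefore independent of $\sigma$.
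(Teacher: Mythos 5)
Your proof is correct, and it runs on the same engine as the paper's: any unitary implementing $(\alpha^{\otimes\N})_g$ on the GNS weak closure can be approximated in $2$-norm by an operator supported on finitely many tensor legs, while the action displaces every leg by the fixed amount $\|\alpha_g(y)-y\|_{2,\tau}>0$, and a commutator estimate turns this mismatch into a contradiction. The execution differs in two ways worth noting. First, the paper encodes the finite-leg approximation via a central sequence $v_g(n)$ (a fixed unitary placed in the $n$-th factor) with approximate commutation $[x_k,v_g(n)]\approx_{\epsilon}0$, and, to make sense of mixed $2$-norm estimates involving the implementing unitary, it extends $\pi_{\tau}$ and $\tau$ to the crossed product $A\rtimes_{\alpha_g}\Z$; you instead take a Kaplansky approximant $V$ from a finite subproduct $B_N$, place the displaced element in leg $N+1$ so that $[V,x]=0$ exactly, and work entirely inside the tracial von Neumann algebra $\pi_{\sigma}(B)''$ with the canonical normal extension of $\sigma$, which avoids the crossed-product detour. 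Second, and more substantively, the paper computes the displacement against the product trace $\tau^{\otimes\N}$, implicitly treating it as the relevant trace on $A^{\otimes\N}$, whereas strong outerness quantifies over all invariant traces; your marginal observation that every tracial state $\sigma$ on $B$ satisfies $\sigma\circ\iota_n=\tau$ (using only simplicity and uniqueness of the trace on $A$) makes the displacement estimate $\sigma$-independent and handles this quantifier explicitly. (In fact the trace on $A^{\otimes\N}$ is unique, by iterating your marginal argument over finite subproducts, so the paper's computation is justified, but neither proof needs to establish that.) What the paper's formulation buys is contact with the central-sequence formalism used throughout Matui--Sato; what yours buys is exact rather than approximate commutation and a cleaner treatment of arbitrary invariant traces.
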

\begin{proof}Let $g\in G\setminus\{1\}$. Since the unitary group $U(A)$ spans $A$ and $\alpha_g\neq\id$, there is $v_g\in U(A)$ such that $\alpha_g(v_g)\neq v_g$. Now define a central sequence in $A^{\otimes\N}$ as follows:
$$v_g(n)=1\otimes\dots\otimes1\otimes v_g\otimes1\otimes\dots\otimes1\otimes\dots,$$
where $v_g$ appears in the $n$-th factor and $1$ appears in every other factor.
Let $\|a\|_2=\sup_{\tau\in T(B)}\tau(a^*a)^{1/2}$, which is a norm on $A$. We will show that if $\alpha^{\otimes\N}_g$ is weakly inner then 
$$\|\alpha^{\otimes\N}_g(v_{g}(n))-v_{g}(n)\|_2\to0,$$
which contradicts the sequence being constant and non-zero as follows:
$$\begin{aligned}\|\alpha^{\otimes\N}_g(v_g(n))-v_g(n)\|_{2}^2&=\tau^{\otimes\N}(1_{A^{\otimes(\N\setminus\{ n\})}}\otimes((\alpha_g (v_g)-v_g)^*(\alpha_g(v_g)-v_g)))\\
&=\tau((\alpha_g(v_g)-v_g)^*(\alpha_g(v_g)-v_g))\\
&=\|\alpha_g(v_g)-v_g\|_2^2.\end{aligned}$$
Now assume there is a unitary $u\in \pi_{\tau}(A)''$ such that $\alpha^{\otimes\N}_g=\Ad u$ on $\pi_{\tau}(A)''$. Then there is a sequence $(x_k)_{k\in\N}$ in $\pi_{\tau}(A)$ such that $x_k\to u$ strongly.  Such a $u$ allows us to extend $\pi_{\tau}$ to a representation of $A\rtimes_{\alpha_g}\Z$ and $\tau$ to a trace on $A\rtimes_{\alpha_g}\Z$ by
$$\tau_{A\rtimes\Z}(x)=\langle\pi_{A\rtimes\Z}(x)\tilde{1},\tilde{1}\rangle,$$
where $\tilde{1}$ is the cyclic vector that appears in the definition of the GNS-representation. Let $\epsilon>0$, fix $k$ so that $\|u-x_k\|_{2,A\rtimes\Z}\approx_{\epsilon/2}0$ by way of $x_k$ strongly converging to $u$, and let $n$ be large enough so that $[x_k,v_g(n)]\eqe0$, which is possible because $v_g(n)$ is a central sequence. We now calculate:
$$\begin{aligned}\|\alpha^{\otimes\N}_g(v_g(n))-v_g(n)\|_{2,A}&=\|uv_g(n)u^*-v_g(n)\|_{2,A\rtimes\Z}\\
											      &= \|uv_g(n)-v_g(n)u\|_{2,A\rtimes\Z}\\	
	 (\|ab\|_2\leq\|a\|_2\|b\|)\quad 				 &\leq 2\|u-x_k\|_{2,A\rtimes\Z}+\|x_kv_g(n)-v_g(n)x_k\|\\												  &\eqe\|x_kv_g(n)-v_g(n)x_k\|\\
			 &\eqe0.\end{aligned}$$\end{proof}
\begin{lem}\label{so2}
Let $A$ be a simple $C^*$-algebra with unique trace and $\alpha$ an action of a group $G$ on $A$ such that $\alpha_g\neq\id$ when $g\neq1$. Let $A_0$ be any simple $C^*$-algebra and $\alpha_0$ any action of $G$ on $A_0$, then $\alpha_0\otimes\alpha^{\otimes\N}$ is a strongly outer action on $A_0\otimes A^{\otimes\N}$.
\end{lem}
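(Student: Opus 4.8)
The plan is to run the argument of Lemma \ref{so} essentially verbatim, the only new feature being that $B:=A_0\otimes A^{\otimes\N}$ may carry many traces since $A_0$ need not have a unique trace. Fix $g\in G\setminus\{1\}$. As $U(A)$ spans $A$ and $\alpha_g\neq\id$, I choose $v_g\in U(A)$ with $\alpha_g(v_g)\neq v_g$ and form the central sequence $v_g(n)$ in $A^{\otimes\N}$ exactly as in Lemma \ref{so}. I then set $w_g(n):=1_{A_0}\otimes v_g(n)\in B$; this is again a sequence of unitaries, central in $B$ because the commutator of $w_g(n)$ with any elementary tensor vanishes once $n$ exceeds the number of active factors while $\|w_g(n)\|=1$. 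Since $\alpha_{0,g}(1_{A_0})=1_{A_0}$, we have $(\alpha_0\otimes\alpha^{\otimes\N})_g(w_g(n))=1_{A_0}\otimes\alpha^{\otimes\N}_g(v_g(n))$, so the difference $(\alpha_0\otimes\alpha^{\otimes\N})_g(w_g(n))-w_g(n)$ is the element with $1$ in every factor except $\alpha_g(v_g)-v_g$ in the $n$-th copy of $A$.

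The crucial new step is a slicing observation that computes the relevant $2$-norm uniformly over all traces. Write $B=D\otimes A$, where $D=A_0\otimes A^{\otimes(\N\setminus\{n\})}$ and the final tensor factor is the $n$-th copy of $A$. For any trace $\tau$ on $B$ the map $a\mapsto\tau(1_D\otimes a)$ is a tracial state on $A$: it is positive and unital, and it is tracial because $\tau$ is, so by uniqueness of the trace on $A$ it equals $\tau_A$. Consequently, writing $w=\alpha_g(v_g)-v_g$, the difference above equals $1_D\otimes w$ and
\[
\|(\alpha_0\otimes\alpha^{\otimes\N})_g(w_g(n))-w_g(n)\|_2^2=\tau(1_D\otimes w^*w)=\tau_A(w^*w)=\|\alpha_g(v_g)-v_g\|_2^2,
\]
a fixed positive constant, independent of both $n$ and $\tau$. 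Positivity holds because $\tau_A$ is faithful on the simple algebra $A$ and $\alpha_g(v_g)\neq v_g$.

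With this in hand I would close exactly as in Lemma \ref{so}. Assuming $(\alpha_0\otimes\alpha^{\otimes\N})_g$ is weakly inner, there is an invariant trace $\tau$ and a unitary $u\in\pi_\tau(B)''$ implementing it, which I approximate strongly by $\pi_\tau(x_k)$ and use to extend $\tau$ to $B\rtimes_{(\alpha_0\otimes\alpha^{\otimes\N})_g}\Z$. The same chain of estimates, using $\|ab\|_2\le\|a\|_2\|b\|$ together with the centrality of $w_g(n)$ to force $\|[x_k,w_g(n)]\|$ small, yields $\|(\alpha_0\otimes\alpha^{\otimes\N})_g(w_g(n))-w_g(n)\|_2\to0$ as $n\to\infty$, contradicting the previous display. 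Since $g\in G\setminus\{1\}$ was arbitrary, no $(\alpha_0\otimes\alpha^{\otimes\N})_g$ is weakly inner, i.e.\ the action is strongly outer.

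The only genuine obstacle is the loss of a unique trace on $B$ coming from $A_0$; every other ingredient is a transcription of Lemma \ref{so}. The slicing observation dissolves this obstacle by showing that the pertinent $2$-norm is evaluated entirely inside the unique-trace factor $A$, so it is the same positive constant for every trace on $B$ and in particular for the invariant trace arising in the definition of weak innerness.
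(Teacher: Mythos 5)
Your proof is correct and takes essentially the same approach as the paper: the paper's own proof is just a one-line remark that the argument of Lemma \ref{so} depends only on the behaviour of $\|\alpha^{\otimes\N}_g(w_g(n))-w_g(n)\|_2$, so the first tensor factor may be changed freely. Your slicing observation---that every trace on $A_0\otimes A^{\otimes(\N\setminus\{n\})}\otimes A$ restricts on the $n$-th copy of $A$ to its unique trace, making the relevant $2$-norm a fixed positive constant uniformly over all traces---is precisely the detail needed to justify that remark when $A_0$ carries many traces.
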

\bp
Since the argument used in the proof of Lemma \ref{so} only relies on the properties of the limit of $\|\alpha^{\otimes\N}_g(w_{g}(n))-w_{g}(n)\|_2$, we are free to change the first factor without affecting the conclusion.
\ep
\bt[Matui-Sato]\label{sotrp} Let A be a unital simple separable $C^*$-algebra with tracial rank zero and with finitely many extremal tracial states. Let $G$ be a countable discrete elementary amenable group. Then an action of $G$ on $A$ has the tracial Rokhlin property if and only if it is strongly outer.
\et
\bp This is Matui-Sato \cite[Theorem 3.7]{MS2} specialized to actions and to elementary amenable groups.
\ep
Note this works more generally for groups satisfying the so-called property (Q).
\section{A family of group actions on the Jiang-Su algebra}
We will introduce the Jiang-Su algebra $\Zz$ via its properties in the following proposition and only rely on these in later sections. 

\bprop\label{infinitez} The Jiang-Su algebra $\Zz$ is a unital simple separable nuclear $C^*$-algebra with a unique tracial state and an isomorphism 
$$\Zz\cong \varinjlim\left(\Zz^{\otimes n}, \id_{\Zz^{\otimes n}}\otimes1\right).$$
\eprop
\bp This appeared as \cite[Theorem 2.9, Corollary 8.8]{JiangSu}.
\ep
Another important example is the universal UHF-algebra $\Qq$.




\bd\label{gamma} Let $G$ be a countable discrete group and identify $\Zz$ with $\Zz^{\otimes G}=\bigotimes_{g\in G}\Zz$ using Lemma \ref{infinitez} and countability of $G$. Now define an action $\beta$ of $G$ on $\Zz$ via this identification by 
$$\beta_g: \bigotimes_{h\in G}z_{h}\mapsto\bigotimes_{h\in G}z_{g^{-1}h}.$$
Again using Lemma \ref{infinitez} to identify $\Zz$ with $\Zz^{\otimes\N}=\bigotimes_{n\in\N}\Zz$, we define the action $\gamma$ of $G$ on $\Zz$ by
$$\gamma_g: \bigotimes_{n\in\N}z_{n}\mapsto\bigotimes_{n\in\N}\beta_g(z_{n}).$$
For any group automorphism $\varphi$, we define $\beta^{\varphi}$ to be the action of $G$ on $\Zz$ given by $g\mapsto (\varphi\circ\beta)_g$. So we have that $\beta^{\id_G}=\beta$.
We can define $\gamma^{\varphi}$ analogously using $\beta^{\varphi}$ instead of $\beta$ and also get that $\gamma^{\id_G}=\gamma$.
\ed
We first show that all of the $\beta^{\varphi}$ are conjugate for different $\varphi$ so we only need to consider $\beta^{\id_G}=\beta$ from here without loss of generality. A similar thing happens when a different ordering of $G$ is taken to define the infinite tensor product.
\bprop Let $\varphi$ be a group automorphism of $G$ and let $\hat{\varphi}$ denote the induced automorphism on $\Zz$ given by
$$\hat{\varphi}: z=\bigotimes_{h\in G}z_{h}\mapsto\bigotimes_{h\in G}z_{\varphi(h)}.$$
Then for all $g\in G$, we have
$$\beta_g^{\varphi}=\hat{\varphi}\circ\beta_g\circ\hat{\varphi}^{-1}.$$
\eprop
\bp This is a straightforward calculation.
\ep
				
\bt\label{gammawrp}
Suppose $G$ is a countable discrete group. Then $\gamma$ is conjugate to $\beta$. In particular, both $\gamma$ and $\beta$ are strongly outer.
\et
\bp Since $\beta_g\neq\id$ for all $g\in G\setminus\{1\}$ and $\Zz$ has unique trace, we apply Lemma \ref{so} with $\alpha=\beta$ and $A=\Zz$ to get that $\gamma$ is strongly outer. Now we annotate with subscripts our copies of $\Zz$ in the tensor product decomposition of $\Zz^{\otimes G}$ to emphasise their position. That is,
$$\Zz^{\otimes G}=\bigotimes_{h\in G}\Zz_{h}.$$
The action $\beta$ acts by permuting these factors. We note that for each $h$, the factor $\Zz_h$ can be further decomposed using Lemma \ref{infinitez} into an infinite tensor product of copies of $\Zz$, where we denote each copy by $\Zz_h^{(l)}$ to emphasise its placement in the decomposition of $\Zz_h$. That is,
$$\Zz_{h}\cong\bigotimes_{l\in\N}\Zz_{h}^{(l)}.$$ 
We see that for each $l\in\N$, $\beta$ leaves the subalgebra $\Zz^{(l)}=\bigotimes_{h\in G}\Zz_h^{(l)}$ invariant and we recover the action $\beta$ when we identify $\Zz^{(l)}$ with $\Zz$. Hence we have that $\beta$ is conjugate to $\beta^{\otimes\N}$ acting on $\bigotimes_{l=1}^{\infty}\Zz^{(l)}$, which is conjugate to $\gamma$ acting on $\Zz$.
\ep
\section{Group actions on tracially approximately divisible $C^*$-algebras}
Recall that a $C^*$-algebra $A$ is called $\Zz$-stable if $A\otimes\Zz\cong A$.
\bd\label{omega} Let $\gamma$ be as in Definition \ref{gamma}. For any \cstar $A$, define the action $\omega^A$ on $A\otimes\Zz$ by
$$\omega^A=\id_A\otimes\gamma.$$
\ed
\begin{rem} If $A$ is unital, the action $\omega^A$ is pointwise approximately inner because all automorphisms on $\Zz$ are approximately inner. If $A$ is $\Zz$-stable, $\omega^A$ gives an action on $A$ which we will also call $\omega^A$.
\end{rem}
Let $M_k$ denote the full $k\times k$ matrix algebra with identity written $1_k$.  
\bl\label{ps} Suppose $G$ be an elementary amenable group. Then for any finite subset $F\subset G$ and $\epsilon>0$, there is a finite $(F,\epsilon)$-invariant subset $K$ of $G$ such that for each $n\in\N$ there is $m(n)\in\N$ and a projection $q_n\in M_{m(n)}\otimes\Zz$ satisfying the following
$$\ba
\omega_g^{M_{m(n)}}(q_n)\omega_h^{M_{m(n)}}(q_n)&\approx_{1/n}0\quad\text{for all $g,h\in K$ with $g\neq h$},\\
\tau(q_n)&\approx_{1/n}|K|^{-1}.\ea$$
\el
\bp Let $F$ be a finite subset of $G$ and let $\epsilon>0$. By Lemma \ref{so2}, $\id\otimes\gamma$ is a strongly outer action on $\Qq\otimes\Zz$. Therefore by Theorem \ref{sotrp} it also has the tracial Rokhlin property. So we have from Definition \ref{mstrp} a finite subset $K$ in $G$ and a central sequence $(q_n')$ consisting of projections in $\Qq\otimes\Zz$ such that for all $g,h\in K$ with $g\neq h$, we have
\bi
\item $\lim_{n\to\infty}(\id_{\Qq}\otimes\gamma_g)(q_n')(\id_{\Qq}\otimes\gamma_h)(q_n')=0$,\\
\item $\lim_{n\to\infty}\max_{\tau\in T(\Qq\otimes\Zz)}|\tau(q_n')-|K|^{-1}|=0$.
\ei
By passing to a subsequence if necessary and noting that $\Qq\otimes\Zz$ has unique trace, we have
$$\ba 
(\id_{\Qq}\otimes\gamma_g)(q_n')(\id_{\Qq}\otimes\gamma_h)(q_n')&\approx_{1/3n}0\quad\text{for all $g,h\in K$ with $g\neq h$},\\
\tau(q_n')&\approx_{1/n}|K|^{-1}.\ea$$
Since $\Qq$ is a UHF-algebra, there are $m(n)$ and $q_n''\in M_{m(n)}$ self-adjoint such that $q_n'\approx_{1/15n}q_n''$. Hence by functional calculus there is a projection $q_n\in M_{m(n)}$ such that $q_n\approx_{5/15n}q_n'$. We see now, since automorphisms are isometric and $\id_{\Qq}\otimes\gamma$ restricts to $\id_{m(n)}\otimes\gamma$ on $M_{m(n)}\otimes\Zz$, that
$$\ba 
(\id_{m(n)}\otimes\gamma_g)(q_n)(\id_{m(n)}\otimes\gamma_h)(q_n)&\approx_{1/3n}(\id_{\Qq}\otimes\gamma_g)(q_n')(\id_{m(n)}\otimes\gamma_h)(q_n)\\
						&\approx_{1/3n}(\id_{\Qq}\otimes\gamma_g)(q_n')(\id_{\Qq}\otimes\gamma_h)(q_n')\\
						&\approx_{1/3n}0\quad\text{for all $g,h\in K$ with $g\neq h$}.\ea$$
We also have
$$\tau(q_n)=\tau(q_n')\approx_{1/n}|K|^{-1}.$$
\ep

\bd\label{stad} Let $A$ be a unital simple separable $C^*$-algebra with strict comparison.
We say that $A$ is \emph{tracially approximately divisible} if for every $\epsilon > 0$, every $n\in\N$, every finite subset $\{a_1,a_2,\dots a_k\}\subset A$, there exists $N>n$ and a $*$-homomorphism $\varphi: M_{N}\to A$ such that for all $i\leq k$, $e\in M_{N}$ with $\|e\|\leq 1$ and $\tau\in T(A)$,\\
\bi
\item $[a_i,\varphi(e)]\eqe0$,\\
\item $\sup_{\tau\in T(A)}|1-\tau(\varphi(1_N))|\eqe0.$
\ei
\ed 
\begin{rem*}The definition above is equivalent to that with the third condition replaced by the standard Cuntz relation. In this case they are tracially $\Zz$-stable (see \cite[Definition 2.1, Theorem 3.3]{trzstable}) and will necessarily have strict comparison even if it is not explicitly assumed.
\end{rem*}

\bt\label{main} If $A$ is a unital simple separable tracially approximately divisible $C^*$-algebra, $G$ is a countable discrete elementary amenable group and $\omega^A$ is an action of $G$ on $A\otimes\Zz$ as in Definition \ref{omega}, then $\omega^A$ has the tracial Rokhlin property.
\et
\bp Let $F$ be a finite subset of $G$ and let $\epsilon>0$.
We aim to show that there is a finite $(F,\epsilon)$-invariant subset $K$ in $G$ and a central sequence $(p_n)_{n\in\N}$ consisting of projections in $A\otimes\Zz$ such that
\bi 
\item $\lim_{n\to\infty}\omega_g^A(p_n)\omega_h^A(p_n)=0$ for all $g,h\in K$,\\
\item $\lim_{n\to\infty}\max_{\tau\in T(A\otimes\Zz)}|\tau(p_n)-|K|^{-1}|=0$.
\ei
We begin by introducing some notation for this proof. Define
$$\Zz^{\otimes n}=\bigotimes_{j\leq n}\Zz\quad\text{with action $\beta^{\otimes n}=\otimes_{j\leq n}\beta$,}$$
and
$$\Zz^{\otimes(\N\setminus n)}=\bigotimes_{j> n}\Zz\quad\text{with action $\beta^{\otimes(\N\setminus n)}=\otimes_{j>n}\beta$}.$$
There are obvious action preserving isomorphisms
$$\rho_n: (\Zz,\gamma)\to (\Zz^{\otimes n}\otimes \Zz^{\otimes(\N\setminus n)},\beta^{\otimes n}\otimes\beta^{\otimes(\N\setminus n)})$$
and
$$\sigma_n:(\Zz,\gamma)\to (\Zz^{\otimes(\N\setminus n)},\beta^{\otimes(\N\setminus n)}).$$
Fix a dense sequence $x_1, x_2, \dots$ in $A\otimes\Zz$. We will proceed to define for each $n\in\N$ a projection $p_n$ to satisfy our initial requirements. To do this, it will be helpful to also establish for $j\leq n$,
$$[p_n,x_j]\approx_{5/n}0.$$
Let $n\in\N$. Find $a_{i,j}\in A$ and $z_{i,j}\in\Zz$ such that for $j\leq n$, we have
$$x_j\approx_{1/n}\sum_{i=1}^{l(j)}a_{i,j}\otimes z_{i,j}.$$
Write
$$L_n=\sum_{j\leq n}\sum_{i\leq l(j)}\|a_{i,j}\|.$$
There exists $n'\in\N$ such that for all $j\leq n$ and $i\leq l(j)$, there are $z_{i,j}'\in\Z^{\otimes n'}$ satisfying
$$\rho_{n'}(z_{i,j})\approx_{\frac{1}{nL_n}}z_{i,j}'\otimes1_{\Zz^{\otimes(\N\setminus n')}}.$$
Define an action preserving isomorphism
$$\chi_n: A\otimes \Zz\to A\otimes\Zz^{\otimes n'}\otimes\Zz$$
by 
$$\chi_{n}=(\id_{A\otimes\Zz^{\otimes n'}}\otimes\sigma_{n'}^{-1})\circ(\id_A\otimes\rho_{n'}).$$
For $j\leq n$ and $i\leq l(j)$, we get
\begin{equation}\label{chixj}\chi_{n}(x_j)\approx_{\frac{2}{n}}  \sum_{i=1}^{l(j)}a_{i,j}\otimes z_{i,j}'\otimes 1_{\Zz}.\end{equation}
(The calculation for (\ref{chixj}) is included here for convenience)
$$\ba \left\|\chi_{n}(x_j)-  \sum_{i=1}^{l(j)}a_{i,j}\otimes z_{i,j}'\otimes 1_{\Zz}\right\|&\approx_{1/n}\left\|\ \sum_{i=1}^{l(j)}a_{i,j}\otimes((\id\otimes\sigma_{n'}^{-1})\rho_{n'}(z_{i,j})-z_{i,j}'\otimes1_{\Zz})\right\|\\
														&=\left\|\ \sum_{i=1}^{l(j)}a_{i,j}\otimes((\id\otimes\sigma_{n'}^{-1})(\rho_{n'}(z_{i,j})-z_{i,j}'\otimes1_{\Zz^{\otimes(\N\setminus n')}}))\right\|\\
														&\leq \sum_{i=1}^{l(j)}\|a_{i,j}\|\|((\id\otimes\sigma_{n'}^{-1})(\rho_{n'}(z_{i,j})-z_{i,j}'\otimes1))\|\\
														&= \sum_{i=1}^{l(j)}\|a_{i,j}\|\|\rho_{n'}(z_{i,j})-z_{i,j}'\otimes1))\|\\
														&\leq \sum_{i=1}^{l(j)}\|a_{i,j}\|\frac{1}{nL_n}\\
														&=\frac{1}{nL_n}\sum_{i=1}^{l(j)}\|a_{i,j}\|\\
														&\leq\frac{L_n}{nL_n}\\
														&=\frac{1}{n}.\ea$$
We now apply Lemma \ref{ps} to get an $m\in\N$, a finite $(F,\epsilon)$-invariant subset $K$ of $G$ and a projection $q_n\in M_{m}\otimes\Zz$ satisfying:
$$\ba
\left((\id_{m}\otimes\gamma_g)(q_n)\right)\left((\id_{m}\otimes\gamma_h)(q_n)\right)&\approx_{1/n}0\quad\text{for all $g,h\in K$ with $g\neq h$},\\
(\tau_m\otimes\tau_{\Zz})(q_n)&\approx_{1/n}|K|^{-1}.\ea$$ 
Thinking of $q_n$ as a matrix with entries $y_{k,l}\in\Zz$, we have
$$q_n=\sum_{k,l=1}^{m}e_{k,l}\otimes y_{k,l},$$
where the $e_{k,l}$ are standard matrix units. Also define for convenience,
$$L_n'=\sum_{j=1}^n\sum_{i=1}^{l(j)}\sum_{k,l=1}^{m}\|y_{k,l}\|\|z_{i,j}'\|.$$     
Since $A$ is tracially approximately divisible, there exists by Definition \ref{stad}, an $m'\in\N$, a $*$-homomorphism $\varphi:M_{m'}\to A$,  satisfying for all  $j\leq n$, $i\leq l(j)$, and $e\in M_{m'}$ with $\|e\|\leq1$,
\begin{eqnarray}\label{ae}
[a_{i,j},\varphi(e)]&\approx_{\frac{1}{nL_n'}}&0\\
\label{tracem'}\tau(\varphi(1_{m'}))&\approx_{\frac{1}{n}}&1\\
\label{m'mn}m'&>&mn.\end{eqnarray} 
Now write $m'=Nm+r$ with $0\leq r<m$, and $N\in\N$ and define an embedding 
$$\psi_n:M_{m}\otimes\Zz\hookrightarrow A\otimes\Zz^{\otimes n'}\otimes\Zz$$
on generators for $e\in M_{m}$ and $z\in\Zz$ by
$$e\otimes z\mapsto \varphi(\diag(e\otimes1_N,0_r))\otimes1\otimes z,$$
where $\diag(e\otimes1_N,0_r)$ denotes a block diagonal matrix with the first $N$ blocks $e$ and zeros for the remaining $r\times r$ block. We see that this embedding respects the group action and the image of $q_n$ is
\begin{equation}\label{psinqn}\psi_n(q_n)=\sum_{k,l=1}^{m}\varphi(\diag(e_{k,l}\otimes1_N,0_r))\otimes1\otimes y_{k,l}.\end{equation}
We now define the promised projections $p_n\in A\otimes\Zz$ by
$$p_n=(\chi_n^{-1}\circ\psi_n)(q_n).$$
We first check for all $j\leq n$ that
$$[p_n,x_j]\approx_{5/n}0.$$
Let $j\leq n$. Then
$$\ba \chi_n([p_n,x_j])&=[\chi_n(p_n),\chi_n(x_j)]\\
				  &=[\psi_n(q_n),\chi_n(x_j)]\\
(\text{use (\ref{chixj})})\quad		  &\approx_{\frac{4}{n}}\left[\psi_n(q_n),\sum_{i=1}^{l(j)}a_{i,j}\otimes z_{i,j}'\otimes 1_{\Zz}\right]\\
(\text{use (\ref{psinqn})})\quad		  &=\sum_{i=1}^{l(j)}\sum_{k,l=1}^{m}[\varphi(\diag(e_{k,l}\otimes1_N,0_r))\otimes1\otimes y_{k,l},a_{i,j}\otimes z_{i,j}'\otimes1_{\Zz}]\\
		  &=\sum_{i=1}^{l(j)}\sum_{k,l=1}^{m}[\varphi(\diag(e_{k,l}\otimes1_N,0_r)),a_{i,j}]\otimes z_{i,j}'\otimes y_{k,l}\\	
(\text{use (\ref{ae})})\quad                &\approx_{1/n}0.\ea$$
We now show that $(p_n)_{n\in\N}$ satisfies the conditions in the definition of the tracial Rokhlin property. 
\bi
\item It is clear that $K$ is a finite $(F,\epsilon)$-invariant subset of $G$.\\
\item The sequence $(p_n)_{n\in\N}$ is central: 
Let $x\in A\otimes\Zz$ and $\epsilon>0$. We have from density that for some $j\in\N$,
$$x\approx_{\epsilon} x_j.$$
Now let $n\geq j$ be such that $1/n<\epsilon$, then
$$\ba      \chi_n([p_n,x]) &=[\chi_n(p_n),\chi_n(x)]\\
				&\approx_{2\epsilon}[\chi_n(p_n),\chi_n(x_j)]\\
				&\approx_{5/n}0.\ea$$
Hence for our choice of $n$ we have
$$[p_n,x]\approx_{7\epsilon}0.$$				
       
\item Orthogonality: Let $g,h\in K$ with $g\neq h$. Then
$$\ba \chi_n(\omega_g^A(p_n))&=\omega_g^A(\chi_n(p_n))\\
														&=(\id\otimes\beta_g^{\otimes n'}\otimes\gamma_g)(\psi_n(q_n))\\
														&=(\id\otimes\id\otimes\gamma_g)(\psi_n(q_n))\\
														&=\psi_n((\id\otimes\gamma_g)(q_n)).	\ea$$
So
$$\ba	\chi_n(\omega_g^A(p_n)\omega_h^A(p_n))&=\chi_n(\omega_g^A(p_n))\chi_n(\omega_h^A(p_n))\\
																							&=\psi_n((\id\otimes\gamma_g)(q_n))\psi_n((\id\otimes\gamma_h)(q_n))\\
																							&=\psi_n((\id\otimes\gamma_g)(q_n)(\id\otimes\gamma_h)(q_n))\\
																							&\approx_{1/n} 0.
	\ea$$									

\item Trace condition: let $\tau\in T(A)$, and let $\tau_{\Zz}$ and $\tau_k$ be the unique tracial states on $\Zz$ and $M_k$ respectively. Then
$$\ba
(\tau\otimes\tau_{\Zz})(p_n)&=(\tau\otimes\tau_{\Zz^{\otimes n'}}\otimes\tau_{\Zz})(\chi_n(p_n))\\
					&=(\tau\otimes\tau_{\Zz^{\otimes n'}}\otimes\tau_{\Zz})(\psi_n(q_n))\\
					&=(\tau\otimes\tau_{\Zz})(q_n)\\
					&=(\tau\otimes\tau_{\Zz})(\varphi(\diag(1_m\otimes1_N,0_r))\otimes1_{\Zz})(\tau_m\otimes\tau_{\Zz})(q_n)\\
					&=\tau(\varphi(\diag(1_m\otimes1_N,0_r)))(\tau_m\otimes\tau_{\Zz})(q_n)\\
					&=\tau(\varphi(1_{m'}))\tau_{m'}(\diag(1_m\otimes1_N,0_r))(\tau_m\otimes\tau_{\Zz})(q_n)\\
					&=\frac{m'-r}{m'}\tau(\varphi(1_{m'}))(\tau_m\otimes\tau_{\Zz})(q_n)\\
		(\text{use (\ref{tracem'})})\quad				&\approx_{1/n}\tau(\varphi(1_{m'}))(\tau_m\otimes\tau_{\Zz})(q_n)\\
			(\text{use (\ref{m'mn})})\quad			&\approx_{1/n}(\tau_m\otimes\tau_{\Zz})(q_n)\\
					&\approx_{1/n}|K|^{-1}.\ea$$
\ei
Therefore we have 
$$\max_{\tau\in T(A\otimes\Zz)}|\tau(p_n)-|K|^{-1}|\leq \frac{3}{n},$$
from which it follows the limit is $0$.
\ep
\bc\label{onA} For any unital simple separable $\Zz$-stable tracially approximately divisible $C^*$-algebra $A$ and any discrete countable elementary amenable group $G$, there exists a pointwise approximately inner action $\omega$ of $G$ on $A$ with the tracial Rokhlin property. Furthermore, $\omega$ can be taken to be isomorphic to $\omega^A$ from Theorem \ref{main}.
\ec
\bp If $A$ is $\Zz$-stable, $\omega^A$ in Theorem \ref{main} is such an action on $A$.
\ep
\bc\label{nucA} For any unital simple separable nuclear tracially approximately divisible $C^*$-algebra $A$ and any discrete countable elementary amenable group $G$, there exists a pointwise approximately inner action $\omega$ of $G$ on $A$ with the tracial Rokhlin property. Furthermore, $\omega$ can be taken to be isomorphic to $\omega^A$ from Theorem \ref{main}.
\ec
\bp Since simple tracially approximately divisible algebras are tracially $\Zz$-stable (\cite[Definition 2.1]{trzstable}), then $A$ being nuclear implies it is in fact $\Zz$-stable (see \cite[Theorem 4.1]{trzstable}). Now use Corollary \ref{onA}.
\ep
\bc If $A$ is a unital simple separable nuclear infinite dimensional $C^*$-algebra of tracial rank at most one and $G$ is any discrete countable elementary amenable group, then there exists a pointwise approximately inner action $\omega$ of $G$ on $A$ with the tracial Rokhlin property. Furthermore, $\omega$ can be taken to be isomorphic to $\omega^A$ from Theorem \ref{main}.
\ec
\bp Lin \cite[Theorem 5.4]{Lintad} shows that $A$ is tracially approximately divisible. Since $A$ is also nuclear, we can apply Corollary \ref{nucA}.
\ep
Note this works more generally for groups satisfying the so-called property (Q) in \cite{MS2}.
\section{The crossed products $\Zz\rtimes_{\gamma}G$}
Let $\gamma$ be as in Definition \ref{gamma}. We investigate the classifiability of $\Zz\rtimes_{\gamma}G$ by examining its rational tracial rank, that is, the tracial rank of $\Qq\otimes(\Zz\rtimes_{\gamma}G)$. Also recall from Theorem \ref{gammawrp} that $\Zz\rtimes_{\gamma}G\cong\Zz\rtimes_{\beta}G$.

We first summarize what can be deduced in a straightforward manner from the literature about $\Zz\rtimes_{\gamma}G$ in the following proposition.
\bprop\label{coqd} If $G$ is a countable discrete amenable, then $\Zz\rtimes_{\gamma}G$
\bi
	\item is unital and separable,\\
	\item simple,\\
	\item has a unique tracial state,\\
	\item is nuclear,\\
	\item and is $\Zz$-stable if $G$ is elementary amenable.
\ei
\eprop
\bp The crossed product is unital and separable since $\Zz$ is unital and separable, and $G$ is discrete and countable. We first give an argument here that $\Zz\rtimes_{\gamma}G$ has unique trace with reference to Kishimoto \cite{kishUO}. Now it suffices to show that $\Qq\otimes(\Zz\rtimes_{\gamma}G)$ has unique trace. But we have $\Qq\otimes(\Zz\rtimes_{\gamma}G)\cong(\Qq\otimes\Zz)\rtimes_{\id\otimes\gamma}G$ and $\id\otimes\gamma=\omega^{\Qq}$ is a strongly outer action (by Lemma \ref{so2}) on $\Qq\otimes\Zz$, which is isomorphic to $\Qq$. So \cite[Theorem 4.5]{kishUO} says that $\omega^{\Qq}$ is pointwise uniformly outer. Now the proof of \cite[Lemma 4.3]{kishUO} applied to each automorphism gives the result. For simplicity use Kishimoto \cite[Theorem 3.1]{kishO}. For nuclearity use Rosenberg \cite[Theorem 1]{JR}. For $\Zz$-stablity use Matui-Sato \cite[Corollary 4.11]{MS2}.
\ep

Recall that if $A\subset B(H)$ is separable then $A$ is a quasidiagonal set of operators if there exists an increasing sequence of finite rank projections, $q_1 \leq q_2 \leq q_3\dots$, such that for all $a\in A$, $[a,q_n]\to0$ and $q_n\to 1_H$ strongly. A separable $C^*$-algebra is quasidiagonal if it has a faithful representation whose image is a quasidiagonal set of operators.

\begin{rem}\label{subqd} It is clear from this definition that a subalgebra of a quasidiagonal $C^*$-algebra is quasidiagonal.
\end{rem}

\bc\label{qdrtr0cor} The following are equivalent:
\bi
\item $\Zz\rtimes_{\gamma}G$ is quasidiagonal\\
\item $\Zz\rtimes_{\gamma}G$ has rational tracial rank zero\\
\item $\Zz\rtimes_{\gamma}G$ has rational tracial rank at most one.
\ei
\ec
\bp
If $\Zz\rtimes_{\gamma}G$ is quasidiagonal, then Proposition \ref{coqd} combined with Matui-Sato \cite[Theorem 6.1]{MS3} allows us to conclude that $\Zz\rtimes_{\gamma}G$ has rational tracial rank zero. The next implication is obvious. Finally, if $\Zz\rtimes_{\gamma}G$ has rational tracial rank at most one, then it is isomorphic to a subalgebra of $\Qq\otimes(\Zz\rtimes_{\gamma}G)$. Since $\Qq\otimes(\Zz\rtimes_{\gamma}G)$ has tracial rank at most one, it is quasidiagonal by Lin \cite[Corollary 6.7]{Linqd}. Hence Remark \ref{subqd} tells us that $\Zz\rtimes_{\gamma}G$ is quasidiagonal.
\ep

To establish a condition for all elementary amenable groups it suffices by \cite{C} and \cite{O} to show it for abelian groups, finite groups and that the condition is preserved by increasing unions, extensions by finite groups and extensions by $\Z$.

\bprop\label{limitG} Suppose $G=\varinjlim (G_i,\varphi_i)$ with $\varphi_i$ injective. Then the $\Zz\rtimes_{\beta^i}G_i$ are quasidiagonal if and only if $\Zz\rtimes_{\beta}G$ is quasidiagonal.
\eprop
\bp It suffices to show that $\Zz\rtimes_{\beta}G$ is an injective inductive limit of the $\Zz\rtimes_{\beta^i}G_i$. Use the injectivity of $\varphi_i$ to define a $*$-homomorphism 
$$\Psi_i: \Zz^{\otimes G_i}\to\Zz^{\otimes G_{i+1}},$$ 
on generators by
$$z=\bigotimes_{g\in G_i}z_g\mapsto 1_{\Zz^{\otimes(G_{i+1}\setminus\varphi_i(G_i))}}\otimes\bigotimes_{h\in\varphi_i(G_i)}z_{\varphi_i^{-1}(h)}.$$
We check this is covariant with respect to $\beta^i$. For $g\in G_i$, we have
$$\ba
\varphi_i(g)\Psi_i(z)\varphi_i(g)^*&=\beta_{\varphi_i(g)}^{i+1}\left(1_{\Zz^{\otimes(G_{i+1}\setminus\varphi_i(G_i))}}\otimes\bigotimes_{h\in\varphi_i(G_i)}z_{\varphi_i^{-1}(h)}\right)\\
&=1_{\Zz^{\otimes(G_{i+1}\setminus\varphi_i(G_i))}}\otimes\bigotimes_{h\in\varphi_i(G_i)}z_{\varphi_i^{-1}(\varphi_i(g)^{-1}h)}\\
&=1_{\Zz^{\otimes(G_{i+1}\setminus\varphi_i(G_i))}}\otimes\bigotimes_{h\in\varphi_i(G_i)}z_{g^{-1}\varphi_i^{-1}(h)}\\
&=\Psi_i\left(\bigotimes_{h\in G_i}z_{g^{-1}h}\right)\\
&=\Psi_i(\beta_g^i(z)).\ea$$
So we have a sequence of injective maps $\Psi_i\rtimes\varphi_i:\Zz\rtimes_{\beta^i}G_i\to\Zz\rtimes_{\beta^{i+1}}G_{i+1}$, whose limit we now show is isomorphic to $\Zz^{\otimes G}\rtimes_{\beta}G$. First notice that $\varinjlim(\Zz^{\otimes G_i},\Psi_i)\cong\Zz^{\otimes G}$ and get the obvious maps
$$\Zz^{\otimes G_i}\hookrightarrow\Zz^{\otimes G}\hookrightarrow\Zz^{\otimes G}\rtimes_{\beta}G$$
and
$$G_i\hookrightarrow G\hookrightarrow \Zz^{\otimes G}\rtimes_{\beta}G.$$
We can show covariance in much the same way as before to get
$$\Zz^{\otimes G_i}\rtimes_{\beta^i}G_i\hookrightarrow \Zz^{\otimes G}\rtimes_{\beta}G.$$
We check that these maps are compatible with the increasing $i$ and conclude there is an injective map
$$\varinjlim(\Zz^{\otimes G_i}\rtimes_{\beta^i}G_i,\Psi_i\rtimes\varphi_i)\hookrightarrow \Zz^{\otimes G}\rtimes_{\beta}G.$$
This map is also surjective because its image contains $\Zz^{\otimes G}$ and $G$, which generate the target algebra.
\ep

\bprop\label{gfinrtr0} If $G$ is a finite group, then $\Zz\rtimes_{\gamma}G$ is quasidiagonal.
\eprop
\bp Let $n=|G|$. We define an embedding of $\Zz\rtimes_{\gamma} G$ into $M_n(\Zz)$ as follows. First define $A\to M_n(\Zz)$ for $a\in A$ by
$$a\mapsto\diag (\gamma_g(a))_{g\in G}.$$
Then define $G\to U(M_n)\otimes1_{\Zz}\to U(M_n(\Zz))$ via its left regular representation.
One check that these satisfy the covariance relations and hence we get our embedding after noting $\Zz\rtimes_{\gamma} G$ is simple.
\ep
\begin{rem}\label{fext} This can be generalized using an elementary argument to show that the class of $G$ for which $\Zz\rtimes_{\gamma}G$ is quasidiagonal is preserved by finite extensions (see for example \cite[Proposition 2.1 (iv)]{ORS}).
\end{rem}
\bprop\label{gabqd} If $G$ is a countable discrete abelian group, then $\Zz\rtimes_{\gamma}G$ is quasidiagonal.
\eprop
\bp Lin \cite[Theorem 9.11]{fgLin} shows that if the crossed product of any AH-algebra and any finitely generated abelian group has an invariant tracial state, then the crossed product is quasidiagonal. We apply this to $(\Qq\otimes\Zz)\rtimes_{\id\otimes\gamma}G\cong\Qq\otimes(\Zz\rtimes_{\gamma}G)$ when $G$ is a finitely generated abelian group to show the crossed product is quasidiagonal. Now $\Zz\rtimes_{\gamma}G$ is a subalgebra of $\Qq\otimes(\Zz\rtimes_{\gamma}G)$ and hence quasidiagonal. The condition on $G$ being finitely generated can be removed by Proposition \ref{limitG}. 
\ep
\begin{rem}\label{Zext} If we examine this proof, we see that it will show that the class of $G$ for which $\Zz\rtimes_{\gamma}G$ is quasidiagonal is preserved by $\Z$-extensions provided $\Zz\rtimes_{\gamma}G$ satisfies the UCT. This was done for $M_{2^{\infty}}\rtimes_{\beta}G$ in \cite{ORS} by noticing that it is a groupoid and applying results of Tu \cite{Tu}. So $M_{2^{\infty}}\rtimes_{\beta}G$ is quasidiagonal, which implies $\Zz\rtimes_{\gamma}G\cong\Zz\rtimes_{\beta}G\subset M_{2^{\infty}}\rtimes_{\beta}G$ is quasidiagonal.
\end{rem}
We now summarize this section.
\bt\label{Ccrtr0} Let $\gamma$ be as in Definition \ref{gamma} and let $G$ be a countable discrete elementary amenable group. Then $\Zz\rtimes_{\gamma}G$ is a unital separable simple nuclear $\Zz$-stable $C^*$-algebra with rational tracial rank zero.
\et
\bp We combine Propositions \ref{limitG}, \ref{gfinrtr0} and \ref{gabqd} along with Remarks \ref{fext} and \ref{Zext} to get $\Zz\rtimes_{\gamma}G$ is quasidiagonal for all elementary amenable $G$ and hence has rational tracial rank zero by Corollary \ref{qdrtr0cor}. The remaining properties are those listed in Proposition \ref{coqd}.
\ep
\bc\label{KZzZ} Let $\gamma$ be as in Theorem \ref{Ccrtr0}. Then $\Zz\rtimes_{\gamma}\Z$ is unital separable simple nuclear $\Zz$-stable with rational tracial rank zero and a unique tracial state as well as satisfying the UCT. Also for $i=0$ or $1$,
$$K_i(\Zz\rtimes_{\gamma}\Z)=\Z.$$
\ec
\bp Putting $G=\Z$ in Theorem \ref{Ccrtr0} shows that $\Zz\rtimes_{\gamma}\Z$ is unital separable simple nuclear $\Zz$-stable with unique tracial state and rational tracial rank zero. Crossed products by $\Z$ always satisfy the UCT. The $K$-groups are obtained using the Pimsner-Voiculescu six-term exact sequence. 
\ep
We also know that every other strongly outer $\Z$-action is outer conjugate to $\gamma$ by Sato \cite[Theorem 1.3]{SatoZ}. 

\section{The crossed products $A\rtimes_{\omega}G$}
Let $A$ be a unital $C^*$-algebra and $G$ a discrete group. Let $\omega$ be as in Definition \ref{omega} and $\gamma$ as in Definition \ref{gamma}. If $A$ is $\Zz$-stable, then $\omega$ is conjugate to an action of $G$ on $A$ that we will also call $\omega$. 
\bprop\label{factor} For $g\in G$ let $u_g$ and $u_g'$ be the implementing unitaries for $\omega_g$ and $\gamma_g$ respectively. There is an isomorphism 
$$i:(A\otimes\Zz)\rtimes_{\omega}G\to A\otimes(\Zz\rtimes_{\gamma}G),$$
such that 
$$i: (a\otimes z)u_g\mapsto a\otimes(zu_g').$$
\eprop
\bp
This is standard.
\ep
\bc  $\Zz\rtimes_{\gamma}G\cong\Zz\rtimes_{\omega}G.$\qed
\ec


\bl\label{qfactor} If $A$ is $\Zz$-stable, then there is a $*$-isomorphism
$$\Qq\otimes(A\rtimes_{\omega} G)\cong(\Qq\otimes A)\otimes((\Zz\rtimes_{\gamma}G)\otimes\Qq).$$
\el
\bp We use $\Qq\cong\Qq\otimes\Qq$ and Proposition \ref{factor} to write
$$\ba (\Qq\otimes A)\otimes((\Zz\rtimes_{\gamma}G)\otimes\Qq)&\cong \Qq\otimes (A\otimes(\Zz\rtimes_{\gamma}G))\\
								&\cong \Qq\otimes((A\otimes\Zz)\rtimes_{\omega} G)\\
								&\cong \Qq\otimes (A\rtimes_{\omega}G).\ea$$
Now since $A$ is $\Zz$-stable, we are done.
\ep
\bprop\label{ussn} Suppose $A$ is a unital separable $\Zz$-stable $C^*$-algebra and $G$ is any countable discrete amenable group. Then $A\rtimes_{\omega}G$ is a unital separable $\Zz$-stable $C^*$-algebra and we also have:
\bi 
\item If $A$ is simple, then $A\rtimes_{\omega}G$ is simple.\\
\item If $A$ is nuclear, then $A\rtimes_{\omega}G$ is nuclear.\\
\item $T(A\rtimes_{\omega}G)=T(A)$.\\
\item If $A$ has real rank zero, then $A\rtimes_{\omega}G$ has real rank zero.
\ei
\eprop
\bp For $\Zz$-stablity we use Proposition \ref{factor} to get
$$\ba \Zz\otimes(A\rtimes_{\omega}G)&\cong \Zz\otimes (A\otimes(\Zz\rtimes_{\gamma}G))\\
																		&\cong (\Zz\otimes A)\otimes (\Zz\rtimes_{\gamma}G).\\
																		&\cong A\otimes(\Zz\rtimes_{\gamma}G)\\
																		&\cong A\rtimes_{\omega}G.\ea$$
Propositions \ref{coqd} and \ref{factor} imply all but the last two claims. For the tracial state spaces, let $\tau_{\gamma}$ be the unique tracial state on $\Zz\rtimes_{\gamma}G$ (Proposition \ref{coqd}) and define a map $T(A)\to T(A\otimes(\Zz\rtimes_{\gamma}G))$ given by $\tau\mapsto \tau\otimes\tau_{\gamma}$. This map is obviously affine and injective, while for surjectivity we make use of a brief argument which can be found as \cite[Lemma 5.15]{Linsome}. Now by Proposition \ref{factor} we have that $T(A\otimes(\Zz\rtimes_{\gamma}G))\cong T(A\rtimes_{\omega}G)$.  Since our algebras are $\Zz$-stable, we use the characterization of real rank zero by R\o rdam \cite[Theorem 7.2]{Rordamrrz} that $K_0(A)$ is uniformly dense in the space of affine functions on $T(A)$ under the standard mapping $\rho_A$ gotten by evaluation. Since $K_0(A)\subset K_0(A\rtimes_{\omega} G)$ via $p\mapsto p\otimes 1$, under the identification $T(A)=T(A\rtimes_{\omega}G)$, $\rho_{A\rtimes_{\omega}G}(K_0(A))=\rho_A(K_0(A))$, which is already uniformly dense. Hence the image of $\rho_{A\rtimes_{\omega}G}$ is uniformly dense and we are done. 
\ep
\bt\label{rtr1} Suppose $A$ is a unital separable simple nuclear $\Zz$-stable $C^*$-algebra and $G$ is a countable discrete elementary amenable group. Let $\omega$ be as in Definition \ref{omega} and let $\gamma$ be as in Definition \ref{gamma}. Then $A\rtimes_{\omega}G$ is a unital separable simple nuclear $\Zz$-stable $C^*$-algebra and:
\bi
\item If $A$ has rational tracial rank at most one, then $A\rtimes_{\omega}G$ has rational tracial rank at most one.\\
\item If $A$ has rational tracial rank zero, then $A\rtimes_{\omega}G$ has rational tracial rank zero.\\
\item If $A$ has tracial rank at most one, satisfies the UCT and $\Zz\rtimes_{\gamma}G$ satisfies the UCT, then $A\rtimes_{\omega}G$ has tracial rank at most one and satisfies the UCT.\\
\item If $A$ has tracial rank zero, satisfies the UCT and $\Zz\rtimes_{\gamma}G$ satisfies the UCT, then $A\rtimes_{\omega}G$ has tracial rank zero and satisfies the UCT.
\ei
\et
\bp By Proposition \ref{ussn}, the conditions of being unital, separable, simple, nuclear and $\Zz$-stable, are retained by the crossed product. To determine the rational tracial rank of $A\rtimes_{\omega}G$ we use Lemma \ref{qfactor} and apply Hu-Lin-Xue \cite[Theorem 4.8]{LinHuXue}, which says that the tracial rank of a tensor product is bounded by the sum of the tracial ranks of the factors, to the algebra on the right hand side of the lemma. The tracial rank of $\Qq\otimes(\Zz\rtimes_{\gamma}G)$ is zero by Theorem \ref{Ccrtr0}, which means the tracial rank is bounded by the rational tracial rank of $A$. This gives us both claims about rational tracial rank. Now we address the claim for $A$ being of tracial rank at most one. We will use \cite[Theorem 4.7]{LinWei} with our $A$ as their $B$ and $\Zz\rtimes_{\gamma}G$ as their $A$ to show that $A\otimes (\Zz\rtimes_{\gamma}G)$ has tracial rank at most one and satisfies the UCT. Hence by Proposition \ref{factor} $A\rtimes_{\omega}G$ has tracial rank at most one and satisfies the UCT.  Now if $A$ was also tracial rank zero, then it is real rank zero and Proposition \ref{ussn} tells us that $A\rtimes_{\omega}G$ is real rank zero. But we know that if an algebra is unital simple of tracial rank at most one and real rank zero, then it has tracial rank zero (see for example \cite[Lemma 3.2]{LinRok}).
\ep

Here is a curious result in the converse direction.
\bt\label{conversecor} Let $A$ be a unital separable simple $\Zz$-stable $C^*$-algebra and let $G$ be a countable discrete elementary amenable group. Let $\omega$ be as in Definition \ref{omega} and let $\gamma$ be as in Definition \ref{gamma}. If $\Zz\rtimes_{\gamma}G$ satisfies the UCT and $A\rtimes_{\omega}G$ has rational tracial rank at most one, then $A$ has rational tracial rank at most one.
\et
\bp Let $B=\Zz\rtimes_{\gamma}G$ and note $A\otimes B=A\rtimes_{\omega}G$ by Proposition \ref{factor}. We now apply Lin-Sun \cite[Theorem 4.2]{LinWei} using Propositions \ref{coqd} and \ref{ussn}, and Theorem \ref{Ccrtr0} to verify their hypotheses. 
\ep
We specialise Theorems \ref{rtr1} and \ref{conversecor} to the case of the integers.
\bc\label{AZ} Suppose $A$ is a unital separable simple nuclear $\Zz$-stable $C^*$-algebra satisfying the UCT and $\omega$ is as in Theorem \ref{main}. Then $A\rtimes_{\omega}\Z$ is a unital separable simple nuclear $\Zz$-stable $C^*$-algebra satisfying the UCT. We also have
\bi
\item $A$ has rational tracial rank at most one if and only if $A\rtimes_{\omega}\Z$ has rational tracial rank at most one. \\
\item If $A$ has rational tracial rank zero, then $A\rtimes_{\omega}\Z$ has rational tracial rank zero.\\
\item If $A$ has tracial rank at most one, then $A\rtimes_{\omega}\Z$ has tracial rank at most one.\\
\item If $A$ has tracial rank zero, then $A\rtimes_{\omega}\Z$ has tracial rank zero. \\
\item $K_i(A\rtimes_{\omega}\Z)=K_0(A)\oplus K_1(A)$ for $i=0$ or $1$.
\ei
\ec
\bp Proposition \ref{ussn} tells us that $A\rtimes_{\omega}\Z$ is unital separable simple nuclear and $\Zz$-stable. The UCT is always preserved by crossed products by $\Z$. For the claim about rational tracial rank, the forward implication is given by Theorem \ref{rtr1} with $G=\Z$ and the converse by Theorem \ref{conversecor} with $G=\Z$.  Since $\Zz\rtimes_{\gamma}\Z$ always satisfies the UCT, the next three claims all follow from Theorem \ref{rtr1} with $G=\Z$. For the $K$-groups, we use the Kunneth Theorem for tensor products combined with knowing the $K$-groups of $\Zz\rtimes_{\gamma}\Z$ from Corollary \ref{KZzZ}.
\ep 

\end{document}